\newcommand{\R}{{\mathbb R}}
\newcommand{\N}{{\mathbb N}}
\newcommand{\non}{{\nonumber}}
\renewcommand{\d}{{\rm d}}
\newtheorem{theorem}{Theorem}
\newtheorem{lemma}{Lemma}
\title{Conical singularities in thin elastic sheets}
\author[1]{Stefan M\"uller\thanks{stefan.mueller@hcm.uni-bonn.de}}
\author[1]{Heiner Olbermann\thanks{heiner.olbermann@hcm.uni-bonn.de}}
\affil[1]{Hausdorff Center for Mathematics \& Institute for Applied Mathematics, 
University of Bonn, Germany}
\date{\today}
\begin{document}
\maketitle
\begin{abstract}
\noindent
When one  slightly pushes  a thin elastic sheet at its center into a hollow
cylinder, the sheet forms  (to a high degree of approximation)  a 
developable cone, or 
 \emph{d-cone} for short. Here we
investigate one particular aspect of  d-cones, namely  the scaling of the  elastic energy with the sheet
thickness $h$. Following recent work of Brandman, Kohn and Nguyen  \cite{Brandman} we study  the  Dirichlet problem of finding the configuration of minimal
elastic energy when the boundary values are given by an exact \emph{d-cone}.
We improve their result 
for the energy scaling. In particular, we 
show that the deviation from the logarithmic energy scaling is bounded by 
a constant times the double logarithm of the thickness.

\end{abstract}
\section{Introduction}
Stress and energy focusing in thin elastic sheets 
has recently attracted a lot of interest in the physics literature
\cite{PhysRevE.71.016612,PhysRevLett.80.2358,Cerda08032005,RevModPhys.79.643,CCMM,PhysRevLett.87.206105,PhysRevLett.78.1303,Lobkovsky01121995,PhysRevLett.90.074302}.
One basic feature are (almost) conical singularities. A conical singularity 
arises, e.g.,  in the following experiment. Put an elastic sheet of
radius $1$ concentrically on top of a hollow cylinder of radius $R < 1$ and push the sheet
down at its centre. It has been observed that the sheet assumes (to a high degree of approximation) the
shape of a developable cone (or \emph{d-cone} for short). 
In the physics literature, this has been
discussed e.g.~in
\cite{PhysRevLett.80.2358,Cerda08032005,PhysRevE.71.016612,RevModPhys.79.643}. There
are several remarkable features of the d-cone: The angle subtended by the
region where the sheet lifts off the rim of the container is a universal
constant (approx.~139$^\circ$), independent of the indentation, the thickness
and the material of the sheet (for small indentations,
\cite{Cerda08032005}). The tip of the d-cone consists of a crescent-shaped
ridge where curvature and elastic stress focus. In numerical simulations it
was found that the  radius of the crescent $R_{\rm cres.}$ scales with the
thickness of the sheet $h$ and the radius of the container $R_{\rm cont.}$ as
$R_{\rm cres.}\sim h^{1/3}R_{\rm cont.}^{2/3}$. This dependence on the
container radius of the shape of the region near the tip  is not understood
\cite{RevModPhys.79.643}. As argued in this latter reference, it cannot be
explained by an analysis of the dominant contributions to the elastic energy,
which are: The bending energy from the region far away from the center, that
is well captured by modeling the d-cone as a developable surface there; and
the bending and stretching energy  part from a core region of size $O(h)$
where elastic strain is not negligible. The result of this (non-rigorous)
argument is an energy scaling $E\sim h^2 (C_1|\log h|+C_2)$.\\
Here, we discuss the scaling of the elastic energy with $h$ in a rigorous
setting.\\ 

The natural variational formulation is to minimize the
elastic energy of the sheet (which contains stretching and bending contributions)
in the class of deformations
$y:B_1\to\R^3$ which satisfy the obstacle constraint
\begin{equation*}
\mathrm{Im}(y)\cap \left\{x\in\R^3: x_1^2+x_2^2=R^2_{\rm cont.},x_3<H_{\rm
    cont.},y(0)=0\right\}=\emptyset
\end{equation*}
where $B_1=\{x\in\R^2:|x|\leq 1\}$ is the reference configuration of the
2-dimensional sheet, and  $H_{\rm cont.}$ is the height of the container. 
The derivation of a precise lower bound of the form 
$E\sim h^2 (C_1|\log h|+C_2)$ for this model looks very hard. 
Indeed even, for the much more severe constraint that
$y$ maps $B_1$ into a very small ball the only 
known rigorous lower bound is that
$\lim_{h \to \infty} E_h(y)/ h^2 =  \infty$ while it is conjectured
that the correct scaling is $E_h(y) \sim C h^{5/3}$ in this setting
(see \cite{MR2358334} for a discussion and a rigorous proof of the upper bound).
To make progress in rigorously  understanding the asymptotic  influence of the regularizing effect
of the thickness $h$ we follow recent work of 
Brandman, Kohn and Nguyen \cite{Brandman} and free ourselves from 
the specific obstacle type constraint and consider instead general Dirichlet problems
where the boundary conditions are given by an exact developable cone.

\medskip

This also partly motivated by   results that have been obtained in the physics literature for
a simpler model. In \cite{Cerda08032005}, the class of allowed deformations
$y$ is restricted to isometries with a singularity at the origin. These maps
are completely determined by their values on the boundary $\partial B_1$. In
order for these maps to be isometric away from the origin, the boundary values
have to be unit speed curves
\[
y|_{\partial B}=\gamma:\partial B_1\to S^2\,.
\]
This effectively reduces the problem from a 2-dimensional to a 1-dimensional
one. After a suitable ad-hoc renormalization of the bending energy (i.e., cutting out
a small ball around the origin where the energy density becomes singular), the elastic
energy is minimized as a functional of $\gamma$. In this simpler setting,
an obstacle of the above type is treatable. In fact, the shape of $\gamma$ in
the region where the sheet lifts off the rim of the cylinder is prescribed by
an ODE that can be solved more or less explicitly (for small
indentations, see \cite{Cerda08032005}).

\section{Setting and statement of the main theorem}
Let $B_r=\{x\in \R^2:| x|  <  r\}$, $A_r=B_r\setminus \overline{B_{r/2}}$ 
and for $y\in W^{2,2}(B_1,\R^3)$ let
\begin{align}
\label{elenergy}
E_h(y)= \int_{B_1} \left( |\nabla y^T\nabla y-Id|^2+h^2| \nabla^2
  y|^2\right)\d x\,.
\end{align}
Furthermore, for a curve $\gamma\in C^3(\partial B_1, \R^3)$ with
$|\gamma|=|\gamma'|=1$, let
\begin{equation}
V_{\gamma}=\left\{y\in W^{2,2}(B_1,\R^3):y|_{\partial B_1}=\gamma,\, y(0)=0\right\}\,.\non
\end{equation}
In  the following, consider such a $\gamma$ to be fixed. 
 By $\tilde
y(x)=|x|\gamma(\hat x)$, we denote the 1-homogeneous surface with boundary
values prescribed by $\gamma$.\\
\\
Existence of minimizers of $E_h$ in the class
$V_\gamma$ follows easily from the fact that
$E_h$ is coercive and convex in the highest derivatives
and the compact embedding
$W^{2,2} \hookrightarrow W^{1,4}$.
Our main result is
\begin{theorem}
\label{mainthm}
Suppose that $\gamma$ does not lie in a plane. 
Then for sufficiently small $h$ we have 
\begin{align*}
   C_1  \ln\frac{1}{h}-  C_1  \ln\left(\ln\frac{1}{h}\right)  -C_2
  \leq \frac{1}{h^2} \min_{y \in V_\gamma} E_h(y) \leq 
C_1\ln\frac{1}{h}+C_3, 
\label{thm2state}
\end{align*} 
where
\[
C_1=C_1(\gamma)=\frac{1}{\ln 2}\int_{B_1\setminus B_{1/2}} |\nabla^2\tilde
  y|^2\d x
\]
and $C_2,C_3$ only depend on $\gamma$. 
\end{theorem}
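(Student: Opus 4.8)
\emph{Strategy.} I would prove the two bounds separately: the upper bound by an explicit gluing construction, the lower bound by a dyadic decomposition of $B_1$ into annuli combined with a quantitative rigidity estimate on each annulus, followed by a careful summation.

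\emph{Upper bound.} Let $\rho=2^{-N}$ with $N=\lceil\log_2(1/h)\rceil$, so $\rho\le h<2\rho$. I would take as competitor $y_h\in V_\gamma$ the map that equals $\tilde y$ on $B_1\setminus B_\rho$ and equals $\rho\,\psi(x/\rho)$ on $B_\rho$, where $\psi\in C^2(\overline{B_1},\R^3)$ is any fixed map with $\psi|_{\partial B_1}=\gamma$, $\nabla\psi|_{\partial B_1}=\nabla\tilde y|_{\partial B_1}$ and $\psi(0)=0$ (such a $\psi$ exists: extend the smooth Cauchy data $(\gamma,\nabla\tilde y|_{\partial B_1})$ into $\overline{B_1}$ and correct the value at the origin with a cutoff). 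Since $\nabla\tilde y$ is $0$-homogeneous, the traces of $y_h$ and $\nabla y_h$ match across $\partial B_\rho$, so $y_h\in W^{2,2}(B_1,\R^3)\cap V_\gamma$. On $B_1\setminus B_\rho$ the stretching term of $\tilde y$ vanishes identically because the induced metric of the $1$-homogeneous cone is Euclidean away from the origin, and by the dyadic scaling invariance $\int_{A_r}|\nabla^2\tilde y|^2\d x=C_1\ln 2$ one gets $h^2\int_{B_1\setminus B_\rho}|\nabla^2\tilde y|^2\d x\le h^2\big(C_1\ln(1/h)+C_1\ln 2\big)$. On $B_\rho$, the substitution $z=x/\rho$ turns the stretching integral into $\rho^2\int_{B_1}|\nabla\psi^T\nabla\psi-\Id|^2\d z$ and $h^2\int_{B_\rho}|\nabla^2 y_h|^2\d x$ into $h^2\int_{B_1}|\nabla^2\psi|^2\d z$; as $\psi$ is fixed and $C^2$ and $\rho\le 2h$, both are $\le C(\gamma)h^2$. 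Adding up gives $E_h(y_h)\le h^2(C_1\ln(1/h)+C_3)$.

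\emph{Lower bound.} Let $y\in V_\gamma$; I may assume $E_h(y)\le h^2(C_1\ln(1/h)+C_3)$. Fix a cutoff radius $\rho_*=h\ln(1/h)$, discard the nonnegative energy on $B_{\rho_*}$, and decompose $B_1\setminus B_{\rho_*}$ into dyadic annuli $A_{2^{-k}}$, $k=0,\dots,K$, with $2^{-K}\sim\rho_*$, so $K=\log_2(1/h)-\log_2\ln(1/h)+O(1)$. Write $\mu_k=\int_{A_{2^{-k}}}|\nabla y^T\nabla y-\Id|^2\d x$, $\beta_k=\int_{A_{2^{-k}}}|\nabla^2 y|^2\d x$, and let $\gamma_k$ be the rescaled boundary curve $\theta\mapsto 2^{k}y(2^{-k}\theta)$, $\theta\in\partial B_1$; then $\sum_k\mu_k\le E_h(y)$ and $E_h(y)\ge h^2\sum_{k=0}^K\beta_k$. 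The heart of the argument is a per-annulus bound
\[
\beta_k\ \ge\ C_1\ln 2-\omega_k,\qquad \omega_k=\omega\big(2^{2k}\mu_k,\ \|\gamma_k-\gamma\|\big),
\]
with $\omega(s,t)\to0$ as $(s,t)\to0$: after rescaling $A_{2^{-k}}$ to the unit annulus $A_1$, the map has rescaled stretching $2^{2k}\mu_k$ and outer boundary trace $\gamma_k$, and a quantitative rigidity estimate must show that an almost-isometric immersion of $A_1$ whose outer boundary is close to the non-planar curve $\gamma$ has bending energy no smaller than $C_1\ln 2$ up to a controlled error — the value attained by the cone over $\gamma$. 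This has to be complemented by a propagation estimate: $\|\gamma_k-\gamma\|$ stays small for all $k\le K$, transporting the Dirichlet datum from $\partial B_1$ inward along the (nearly developable) bulk and controlled by the accumulated stretching $\sum_j\mu_j$. Summing,
\[
\frac{1}{h^2}E_h(y)\ \ge\ (K+1)C_1\ln 2-\sum_{k=0}^K\omega_k\ \ge\ C_1\big(\ln(1/h)-\ln\ln(1/h)\big)-C_2,
\]
once one checks $\sum_k\omega_k=O(1)$; e.g. if $\omega(s,t)\lesssim\sqrt s+t$ then Cauchy--Schwarz gives $\sum_k 2^k\sqrt{\mu_k}\lesssim 2^K\big(\sum\mu_k\big)^{1/2}\lesssim 2^K h\sqrt{\ln(1/h)}\to0$ precisely because $2^K\sim(h\ln(1/h))^{-1}$, while the boundary part $\sum_k\|\gamma_k-\gamma\|$ is bounded by the propagation step.

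\emph{Main obstacle.} The construction and the dyadic bookkeeping are routine. The real work is the combination of (i) a quantitative rigidity lemma stating that an almost-isometric $W^{2,2}$ immersion of an annulus with prescribed nearly-fixed non-planar boundary curve has nearly optimal bending energy, and (ii) the propagation estimate ensuring the rescaled boundary data $\gamma_k$ does not degenerate towards a planar curve as $2^{-k}\downarrow h\,\mathrm{polylog}(1/h)$. Balancing the per-annulus stretching budget against the number of annuli and against the deterioration of the boundary data under rescaling — and thereby identifying the admissible cutoff scale $\rho_*$ — is exactly what forces $\rho_*$ to exceed $h$ by a logarithmic factor, and hence is the source of the $\ln\ln(1/h)$ loss that the theorem records.
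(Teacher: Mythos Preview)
Your upper bound is essentially the paper's construction, so that part is fine.

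For the lower bound, the dyadic bookkeeping, the choice of cutoff $\rho_*\sim h\ln(1/h)$, and the summation that produces the $\ln\ln(1/h)$ loss all match the paper. The genuine gap is your item (i). The ``quantitative rigidity lemma'' you invoke --- that an almost-isometric $W^{2,2}$ immersion of $A_1$ with outer trace close to the non-planar $\gamma$ has bending energy at least $C_1\ln 2-\omega$ --- is not a known result, and as stated it is not even clearly true: isometric immersions of an annulus with only the outer boundary prescribed are far from unique (developable surfaces are ruled but the rulings need not pass through the origin), so there is no a priori reason the cone is optimal among them. Proving (i) together with the propagation estimate (ii) is not easier than the theorem itself; you have relocated the difficulty rather than resolved it.

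The paper avoids both (i) and (ii) by a more direct route. It never compares $y$ to an abstract almost-isometry class; it compares $y$ to the fixed cone $\tilde y$ and controls $e=y-\tilde y$ in $L^2$ on each $A_{r_0}$ by a global radial-fibre argument: along each ray one has $|e'|^2=|\eta'|^2-1-(2\gamma\cdot e)'$, the first piece is bounded by the membrane energy, and the boundary term is controlled because $e(1,\theta)=0$ and $|e(h,\theta)|\lesssim h\ln(1/h)$ (the latter via an $L^\infty$ bound on $y$ in $B_h$). This single estimate replaces your propagation step and feeds directly into the expansion $|\nabla^2 y|^2=|\nabla^2\tilde y|^2+2\nabla^2\tilde y{:}\nabla^2 e+|\nabla^2 e|^2$: on each rescaled annulus the cross term is integrated by parts, bounded via trace and interpolation inequalities in terms of $\|e\|_{L^2}$ and $\|\nabla^2 e\|_{L^2}$, and the $\|\nabla^2 e\|_{L^2}^2$ portion is absorbed back into the good term. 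No rigidity statement about general almost-isometric annuli is needed.
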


\bigskip
This improves work of Brandman, Kohn and Nguyen
\cite{Brandman} who showed that\\
$\liminf_{h \to 0} \frac{1}{h^2 \ln(1/h)} \min E_h \geq C_1/2$ and
$\limsup_{h \to 0} \frac{1}{h^2 \ln(1/h)} \min E_h \leq C_1$ and, very recently
in parallel to our work,
that  $\lim_{h \to 0} \frac{1}{h^2 \ln(1/h)} \min E_h = C_1$.
The result above shows that the deviation from the leading order logarithm
is at most a double logarithm. Indeed a natural conjecture is that this
error is or order 1, but we have not been able to prove or disprove this so far.

Our proof of the crucial  lower bound consists of three steps: First we  estimate 
the $L^\infty$-norm of $y$ in the ball $B_h$ (see Lemma
 \ref{lemma:suph} below).  Then we derive the
key estimate for the $L^2$-norm of $e=y-\tilde y$ on
dyadic rings $A_{2^{-j}}$ (see Lemma \ref{eL2} below). Finally  we argue that
since $y$ is close to $\tilde y$ in $L^2$, the bending energy 
of $y$ can be bounded from below by the bending energy of $\tilde y$, up to 
a small error. 

\section{Upper bound and $L^2$ estimate of $e=y-\tilde y$}

The proof of the upper bound is standard and we include it for the convenience of the reader.

\begin{lemma}
\label{upperbound}
\begin{align}
\inf_{y\in V_\gamma} E_h(y)<C_1 h^2\ln \frac{1}{h}+C_2h^2\non
\end{align}
where $C_1= \left(\ln 2\right)^{-1}\int_{B_{1}\setminus
  B_{1/2}}|\nabla^2 \tilde y|^2$.
\end{lemma}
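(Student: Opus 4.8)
The plan is to construct an explicit competitor $y_h \in V_\gamma$ and estimate its energy. The natural candidate is a surface that coincides with the exact developable cone $\tilde y(x) = |x|\gamma(\hat x)$ on the annulus $B_1 \setminus B_r$ for some small $r = r(h)$ to be chosen, and that is modified inside $B_r$ so as to cap off the conical singularity: on $B_r$ one should interpolate (in a $1$-homogeneous-type fashion, or by a suitable smoothed cone) between $\tilde y|_{\partial B_r}$ and the condition $y(0) = 0$, but with the tip slightly smoothed out at scale comparable to $r$ so that $\nabla^2 y$ is bounded by $C/r$ there rather than singular. Equivalently, one rescales: let $y_h = \tilde y$ outside $B_r$ and $y_h(x) = r\, \phi(x/r)$ inside, where $\phi: B_1 \to \R^3$ is a fixed $W^{2,2}$ map with $\phi|_{\partial B_1} = \gamma$, $\phi(0)=0$; this exists because $\tilde y$ itself (suitably mollified near $0$) is such a $\phi$. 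The key point is that $\tilde y$ is exactly isometric away from the origin, so the stretching term $|\nabla y^T \nabla y - \Id|^2$ vanishes on $B_1 \setminus B_r$ and contributes only from $B_r$.

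Next I would estimate the two contributions. For the bending term, scale invariance of $\tilde y$ gives $\int_{A_{2^{-j}}} |\nabla^2 \tilde y|^2\,\d x = \int_{A_1}|\nabla^2\tilde y|^2\,\d x =: C_1 \ln 2$ for every $j$ (because $|\nabla^2 \tilde y(x)|^2$ scales like $|x|^{-2}$ and the annuli have fixed aspect ratio), so summing over the dyadic annuli between radius $r$ and radius $1$ yields $\int_{B_1\setminus B_r} |\nabla^2 \tilde y|^2\,\d x = C_1 \ln(1/r) + O(1)$. Hence $h^2 \int_{B_1 \setminus B_r}|\nabla^2 y_h|^2 \le C_1 h^2 \ln(1/r) + C h^2$. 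For the inner ball, the rescaling $y_h(x) = r\phi(x/r)$ gives $\nabla^2 y_h(x) = r^{-1}(\nabla^2\phi)(x/r)$, so $\int_{B_r}|\nabla^2 y_h|^2\,\d x = \int_{B_1}|\nabla^2\phi|^2 = C$, contributing $C h^2$; and $\nabla y_h^T \nabla y_h - \Id$ is bounded pointwise on $B_r$, so the stretching term contributes $\int_{B_r}|\cdots|^2\,\d x \le C r^2$. Adding everything: $E_h(y_h) \le C_1 h^2 \ln(1/r) + C h^2 + C r^2$.

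Now I optimize over $r$. Choosing $r = h$ balances the error terms $Ch^2$ and $Cr^2 = Ch^2$, and gives $E_h(y_h) \le C_1 h^2 \ln(1/h) + C_2 h^2$, which is exactly the claimed bound with the stated constant $C_1 = (\ln 2)^{-1}\int_{B_1 \setminus B_{1/2}}|\nabla^2\tilde y|^2$. One should double-check that the interpolation between annuli in the definition of $y_h$ near the gluing radius $r$ is done so that $y_h \in W^{2,2}$ (this needs the traces of $y_h$ and $\nabla y_h$ to match across $\partial B_r$; since we literally glue $\tilde y$ to its own rescaling this is automatic if $\phi$ is chosen to agree with $\tilde y$ in a neighborhood of $\partial B_1$) and that the resulting competitor genuinely lies in $V_\gamma$, i.e. has the right boundary values on $\partial B_1$ and sends $0$ to $0$.

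The only mild obstacle is the construction of the capping map $\phi$ near the origin with finite $W^{2,2}$ norm: $\tilde y$ itself is \emph{not} in $W^{2,2}(B_1)$ because $|\nabla^2 \tilde y| \sim |x|^{-1}$ is not square-integrable near $0$ in two dimensions — which is precisely why one cannot take $r = 0$. So one must mollify the tip, e.g. replace $\tilde y$ on $B_{1/2}$ by a smooth surface with $y(0)=0$ that matches $\tilde y$ and its gradient on $\partial B_{1/2}$; such a $\phi$ clearly exists (the constraint is finitely many conditions on a ball, solvable since $\gamma \in C^3$), with some finite constant $C = C(\gamma)$ for $\int_{B_1}|\nabla^2\phi|^2$ and for $\sup_{B_1}|\nabla\phi^T\nabla\phi - \Id|$. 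Everything else is a routine scaling computation, and this is why the proof is labeled standard.
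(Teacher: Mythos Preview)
Your proof is correct and follows essentially the same approach as the paper: cap off the conical singularity at scale $r=h$ and exploit the scale invariance of the bending energy of $\tilde y$ on dyadic annuli. The paper's construction is the explicit special case $\phi(x)=\varphi(|x|)\,\gamma(\hat x)$ of your capping map (with $\varphi\in C^2(\R)$, $\varphi(t)=t$ for $t\ge 1$ and $\varphi\equiv 0$ for $t\le 1/2$), which makes the $W^{2,2}$ gluing across $\partial B_h$ and the bounds $|\nabla y_h|\le C$, $|\nabla^2 y_h|\le C/h$ on $B_h$ immediate.
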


\begin{proof}
Let $\phi: \R  \to \R$ be a $C^2$ function with $\phi(t) = t$ for $t \geq 1$ and
$\phi(t) = 0$ for $t \le 1/2$ and set
\begin{align*}
y_h(x) = h \phi\left(\frac{x}{h}\right) \gamma\left(\frac{x}{|x|}\right) .
\end{align*}
Then $y_h = \tilde y$ on $B_1 \setminus B_h$.
Hence $(\nabla y)^T \nabla y = Id$ in  $B_1 \setminus B_h$ and
\begin{equation}
\int_{B_1 \setminus B_h } \left( |\nabla y^T\nabla y-Id|^2+h^2| \nabla^2
  y|^2\right)\d x =  C_1 \ln \frac{1}{h} .
\end{equation}
Moreover in $B_h$ one has the estimates $|\nabla y_h| \leq C$ and $|\nabla^2 y_h|
 \leq C /h$. This implies the assertion. 

\end{proof}

Now we prove some auxiliary lemmas which  will
allow us to estimate $\sup_{B_h} |y|$.
This estimate will be needed in the proof of the
$L^2$ bound for  
of $y-\tilde y$  on dyadic rings.

\begin{lemma}
\label{supv}
Let $v\in W^{2,2}(B_h)$. Then
\[
\sup_{x\in B_h}\left|v(x)-v(0)-\left(\fint_{B_h(0)}  \nabla
    v(x')\d x'\right)\cdot x\right|\leq C h \| \nabla^2 v\|_{L^2(B_h)}\,.
\]
\end{lemma}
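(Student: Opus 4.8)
The plan is to prove the pointwise estimate for $v - (\text{affine approximation})$ by a scaling argument combined with a Sobolev/Poincaré-type inequality. First I would reduce to the unit ball $B_1$ by the substitution $w(z) = h^{-1} v(hz)$ for $z \in B_1$; then $\nabla w(z) = \nabla v(hz)$ and $\nabla^2 w(z) = h\,\nabla^2 v(hz)$, so $\|\nabla^2 w\|_{L^2(B_1)} = \|\nabla^2 v\|_{L^2(B_h)}$ (the factors $h^{-1/2}$ from the Jacobian and $h$ from the second derivative combine to $h^{1/2}$, against $h^{-1}$ from rescaling $v$... I will just check the exponents carefully so the $h$ on the right-hand side comes out with power exactly one). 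The claimed inequality is scale-invariant in the right way: the left side of the rescaled statement is $h^{-1}\sup_{B_h}|\cdots|$ and the right side is $h^{-1}\cdot Ch\|\nabla^2 v\|_{L^2(B_h)} = C\|\nabla^2 w\|_{L^2(B_1)}$, so it suffices to prove
\begin{equation*}
\sup_{z\in B_1}\Bigl|w(z)-w(0)-\Bigl(\fint_{B_1}\nabla w\Bigr)\cdot z\Bigr|\leq C\|\nabla^2 w\|_{L^2(B_1)}.
\end{equation*}

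Next I would establish this unit-scale estimate. Set $\ell(z) = w(0) + (\fint_{B_1}\nabla w)\cdot z$, the affine function matching $w$ at the origin and matching the mean gradient. The function $g = w - \ell$ satisfies $g(0) = 0$, $\fint_{B_1}\nabla g = 0$, and $\nabla^2 g = \nabla^2 w$. By the Sobolev embedding $W^{2,2}(B_1)\hookrightarrow C^0(\overline{B_1})$ in two dimensions, $\|g\|_{L^\infty(B_1)}\leq C\|g\|_{W^{2,2}(B_1)}$. It then remains to control the full $W^{2,2}$ norm of $g$ by $\|\nabla^2 g\|_{L^2}$ alone, which follows from two applications of the Poincaré inequality: first, since $\fint_{B_1}\nabla g = 0$, Poincaré gives $\|\nabla g\|_{L^2(B_1)}\leq C\|\nabla^2 g\|_{L^2(B_1)}$; second, since $g(0)=0$, we need $\|g\|_{L^2(B_1)}\leq C(\|\nabla g\|_{L^2} + |g(0)|\cdot|B_1|^{1/2}) = C\|\nabla g\|_{L^2(B_1)}$ — here one must be slightly careful because $g(0)=0$ is a pointwise condition, not a zero-average condition, so instead of the plain Poincaré inequality I would first bound $\|g - \fint g\|_{L^2}\leq C\|\nabla g\|_{L^2}$ and then control $|\fint_{B_1} g|$ via $|\fint g| = |\fint(g - g(0))|\leq \|g-g(0)\|_{L^\infty}$, and absorb — or, more cleanly, just invoke the $W^{2,2}\hookrightarrow C^0$ estimate in the form $\|g\|_{L^\infty}\leq C(\|\nabla^2 g\|_{L^2} + |\fint_{B_1}g| + |\fint_{B_1}\nabla g|)$ which holds for $g\in W^{2,2}(B_1)$ and then use $\fint_{B_1}\nabla g = 0$ together with $|\fint_{B_1} g|\leq \tfrac12\|g\|_{L^\infty}$ (since $g(0)=0$, the oscillation of $g$ is at most $2\|g\|_{L^\infty}$, hence $|\fint g - g(0)| = |\fint g|\leq \|g\|_{L^\infty}$, giving a constant that can be absorbed by taking $C$ large enough and rearranging). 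Chaining these yields $\|g\|_{L^\infty(B_1)}\leq C\|\nabla^2 g\|_{L^2(B_1)} = C\|\nabla^2 w\|_{L^2(B_1)}$, which is the desired estimate.

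The main obstacle, and the only place requiring real care, is the handling of the pointwise normalization $g(0)=0$: the standard Poincaré and Sobolev inequalities are naturally stated with average-zero normalizations, and converting the pointwise condition into a usable bound requires the self-improving absorption argument sketched above (bounding $|\fint g|$ by $\|g\|_{L^\infty}$ and then moving it to the left-hand side). One could alternatively avoid this by working with a version of the Bramble–Hilbert lemma, or by directly noting that $z\mapsto g(z)$ with $g(0)=0$ and mean-zero gradient lies in a closed subspace of $W^{2,2}(B_1)$ on which $\|\nabla^2\cdot\|_{L^2}$ is an equivalent norm (the seminorm is a norm there because affine functions are the only ones annihilated by $\nabla^2$, and the only affine function with $g(0)=0$ and $\fint\nabla g=0$ is $g\equiv 0$), which gives the inequality abstractly. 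Everything else — the scaling reduction and the Sobolev embedding in 2D — is routine.
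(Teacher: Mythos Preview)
Your approach is the same as the paper's: rescale by $u(x)=h^{-1}v(hx)$ to reduce to the unit ball, then combine the embedding $W^{2,2}(B_1)\hookrightarrow C^0(\overline{B_1})$ with the Poincar\'e inequality. The paper says only this and leaves the details; your closed-subspace/equivalent-norm argument at the end is the clean way to handle the pointwise constraint $g(0)=0$ --- note that your earlier absorption variant does not work as written, since from $g(0)=0$ you only get $|\fint g|\le \|g\|_{L^\infty}$ with coefficient~$1$, which cannot be moved to the left-hand side.
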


\begin{proof} 
For $h=1$ this follows from the embedding
$W^{2,2}(B_1)\hookrightarrow
C^0(B_1)$ and the Poincar\'e inequality.
For $h \neq 1$ the assertion follows by considering the rescaled
function $u(x) = \frac{1}{h} v(hx)$. 
\end{proof}

\begin{lemma}
\label{fintlem}
Let $w\in W^{1,2}(B_1)$, $0<\epsilon<1$. Then
\[
\left|\fint_{B_\epsilon}w\,\d x-\fint_{B_1}w\,\d x\right|\leq
C \left(\ln\frac{1}{\epsilon} \right)^{1/2}
\left(\int_{B_1}|\nabla w|^2\d x\right)^{1/2}\,.
\]
\end{lemma}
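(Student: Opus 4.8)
The plan is to reduce the claim to a one–dimensional statement about the spherical averages
\[
 g(\rho):=\fint_{\partial B_\rho}w\,\d\mathcal H^1,\qquad 0<\rho<1,
\]
and to extract the \emph{square root} of $\ln(1/\epsilon)$, rather than its first power, by applying the Cauchy--Schwarz inequality in the radial variable. A naive dyadic telescoping of the solid averages $\fint_{B_{2^{-j}}}w$ would only yield the weaker bound $C\ln(1/\epsilon)\,\|\nabla w\|_{L^2}$; the gain of the exponent $\tfrac12$ is precisely what Cauchy--Schwarz provides.

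First I would recall the elementary facts for $w\in W^{1,2}(B_1)$: for a.e.\ $\rho$ the restriction $w|_{\partial B_\rho}$ lies in $W^{1,2}(\partial B_\rho)\hookrightarrow C^0$, the map $\rho\mapsto g(\rho)$ is locally absolutely continuous on $(0,1]$, and $g'(\rho)=\fint_{\partial B_\rho}\partial_r w\,\d\mathcal H^1$, whence $|g'(\rho)|\le\big(\fint_{\partial B_\rho}|\nabla w|^2\,\d\mathcal H^1\big)^{1/2}=(2\pi\rho)^{-1/2}\psi(\rho)^{1/2}$, where $\psi(\rho):=\int_{\partial B_\rho}|\nabla w|^2\,\d\mathcal H^1$ and $\int_0^1\psi=\|\nabla w\|_{L^2(B_1)}^2$. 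Integrating and applying Cauchy--Schwarz with the splitting $\psi^{1/2}\rho^{-1/2}=\psi(\rho)^{1/2}\cdot\rho^{-1/2}$ gives, for $0<\rho_1<\rho_2\le1$,
\[
 |g(\rho_2)-g(\rho_1)|\ \le\ \int_{\rho_1}^{\rho_2}|g'|\,\d\rho\ \le\ \frac1{\sqrt{2\pi}}\Big(\int_{\rho_1}^{\rho_2}\psi\,\d\rho\Big)^{1/2}\Big(\int_{\rho_1}^{\rho_2}\frac{\d\rho}{\rho}\Big)^{1/2}\ \le\ C\Big(\ln\frac{\rho_2}{\rho_1}\Big)^{1/2}\|\nabla w\|_{L^2(B_1)}.
\]

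Then I would pass from spherical to solid averages. Since $\fint_{B_r}w=\int_0^r\frac{2\rho}{r^2}g(\rho)\,\d\rho$ is a weighted average of $g$ over $(0,r)$ whose weight integrates to $1$, subtracting this identity for $r=\epsilon$ and $r=1$ and using $\int_0^\epsilon\frac{2\rho}{\epsilon^2}\,\d\rho=\int_\epsilon^1 2\rho\,\d\rho=1-\epsilon^2$ produces the clean representation
\[
 \fint_{B_\epsilon}w-\fint_{B_1}w=\Big(\frac1{\epsilon^2}-1\Big)\int_0^\epsilon 2\rho\,\big(g(\rho)-g(\epsilon)\big)\,\d\rho\ -\ \int_\epsilon^1 2\rho\,\big(g(\rho)-g(\epsilon)\big)\,\d\rho,
\]
in which the additive terms $\pm g(\epsilon)(1-\epsilon^2)$ have cancelled. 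Using the spherical estimate: in the first integral $|g(\rho)-g(\epsilon)|\le C(\ln(\epsilon/\rho))^{1/2}\|\nabla w\|_{L^2(B_1)}$ and $\int_0^\epsilon 2\rho(\ln(\epsilon/\rho))^{1/2}\,\d\rho=c_0\epsilon^2$ with $c_0:=\int_0^1 2s(\ln(1/s))^{1/2}\,\d s<\infty$, so that contribution is $\le C\,c_0(1-\epsilon^2)\|\nabla w\|_{L^2(B_1)}$; in the second integral $|g(\rho)-g(\epsilon)|\le C(\ln(1/\epsilon))^{1/2}\|\nabla w\|_{L^2(B_1)}$ for $\rho\le1$, so that contribution is $\le C(\ln(1/\epsilon))^{1/2}\|\nabla w\|_{L^2(B_1)}$. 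Finally $1-\epsilon^2\le2(1-\epsilon)\le2(1-\epsilon)^{1/2}\le2(\ln(1/\epsilon))^{1/2}$ for $\epsilon\in(0,1)$ (using $\ln(1/\epsilon)\ge1-\epsilon$), which absorbs the first contribution and yields the assertion.

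I expect the only genuine point to be exactly this bookkeeping that produces the exponent $\tfrac12$ — the Cauchy--Schwarz step, together with the cancellation that keeps the harmless prefactor $1-\epsilon^2$ in front of the ``small-scale'' term — while the measure-theoretic properties of $g$ and the regime $\epsilon\to1$ are routine. An alternative that sidesteps absolute continuity is a dyadic chain over the fat annuli $B_{2^{-j}}\setminus B_{2^{-j-2}}$: the scaled $L^2$ Poincar\'e inequality on each bounds $|\langle w\rangle_{A_{2^{-j}}}-\langle w\rangle_{A_{2^{-j-1}}}|$ by $C\|\nabla w\|_{L^2(B_{2^{-j}}\setminus B_{2^{-j-2}})}$, and one then sums over the $\sim\ln(1/\epsilon)$ scales by Cauchy--Schwarz, using that each annulus lies in at most two fat annuli; this variant has the same structure and the same main obstacle.
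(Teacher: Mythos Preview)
Your proof is correct and takes a genuinely different route from the paper's. You work with the spherical averages $g(\rho)=\fint_{\partial B_\rho}w$ and extract the factor $(\ln(1/\epsilon))^{1/2}$ from a continuous Cauchy--Schwarz in the radial variable, via $\int_{\rho_1}^{\rho_2}\rho^{-1}\,\d\rho=\ln(\rho_2/\rho_1)$; the passage from $g$ back to solid averages is then handled by your weighted-average identity and the cancellation you display. The paper instead telescopes the \emph{solid} averages over dyadic scales: from the scaled Poincar\'e inequality one has $|\fint_{B_{r/2}}w-\fint_{B_r}w|\le C r^{-1}\int_{B_r}|\nabla w|$, and summing these for $r=2^{-k}$, $k=0,\dots,n-1$, produces a single integral $\int_{B_1}f\,|\nabla w|$ with weight $f=\sum_{k=0}^{n-1}2^k\chi_{B_{2^{-k}}}$; one then applies Cauchy--Schwarz once and checks $\|f\|_{L^2(B_1)}\le C\sqrt n$. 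So your dismissal of ``naive dyadic telescoping'' is slightly unfair: dyadic telescoping \emph{does} give the sharp exponent, provided the Cauchy--Schwarz is applied after aggregation rather than termwise. Your approach is more direct and yields explicit constants; the paper's avoids the absolute-continuity bookkeeping for $g$ near $\rho=0$ and transfers immediately to settings where only a scale-invariant Poincar\'e inequality is available. The alternative you sketch at the end---dyadic chain plus Cauchy--Schwarz over $\sim\log_2(1/\epsilon)$ scales, using bounded overlap of the fat annuli---is a third variant, equivalent in spirit to the paper's but with the Cauchy--Schwarz organized as a discrete sum rather than via the weight $f$.
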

\begin{proof} 
The Poincar\'e inequality  for  $ w - \fint_{B_{R}}w$ implies that
for $ R \in [1/2, 1)$
\begin{equation} \label{eq:poincare1}
\left|\fint_{B_{R}} w \,  \d x-\fint_{B_{1}}  w \,\d x    \right|
\leq C \int_{B_1} | \nabla w  | \d x
\end{equation}
and scaling yields in particular
\begin{equation*}
\left|\fint_{B_{r/2}} w \,  \d x-\fint_{B_{r}}  w \,\d x    \right|
\leq C \frac{1}{r}  \int_{B_r} | \nabla w | \d x .
\end{equation*}
Apply this with $r = 2^{-k}$ for $k= 0, \ldots, n-1$ and
define 
\begin{equation}
f(x) := \sum_{k=0}^{n-1} 2^k \chi_{B_{2^{-k}}}.
\end{equation}
Then
\begin{equation*}
\left|\fint_{B_{2^{-n}}} w \,  \d x-\fint_{B_{1}}  w \,\d x    \right|
\leq C \int_{B_1}  f |\nabla w |  \d x  \leq \|f \|_{L^2(B_1)}   \|  \nabla w  \|_{L^2(B_1)} 
\end{equation*}
Now we have $ f \leq  2^{k+1}$ in $B_{2^{-k}} \setminus B_{2^{-k-1}}$ for $k \leq n-1$
and $ f \leq 2^{n+1}$ in $B_{2^{-n}}$. This implies that
$\|f\|_{L^2(B_1)} \leq C \sqrt{n}$. 
Choose $n$ such that $2^{-n}\geq \epsilon>2^{-(n+1)}$. Then scaling
of  \eqref{eq:poincare1} and the Cauchy-Schwarz inequality yield
\begin{equation*} 
\left|\fint_{B_{\epsilon}} w \,  \d x-\fint_{B_{2^{-n}}}  w \,\d x    \right|
\leq C 2^n \int_{B_{2^{-n} } }| \nabla w  | \d x 
\leq C \| \nabla w \|_{L^2(B_{2^{-n}})}
\end{equation*}
which completes the proof since $ n \leq  \ln_2 \frac{1}{\epsilon}$. 
\end{proof}

\begin{lemma}  \label{lemma:suph}
There exists constants such that for all $ 0 < h \leq 1/4$ and all $y \in V_\gamma$ we have
\begin{equation}
\sup_{B_h} |y| \leq C h + C h  \left( \ln \frac{1}{h} \right)^{1/2} \| \nabla^2 y\|_{L^2(B_1)}
\leq C h + C \left( \ln \frac{1}{h} \right)^{1/2} E_h^{1/2} (y) .
\end{equation}
\end{lemma}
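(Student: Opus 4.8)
The plan is to bound $\sup_{B_h}|y|$ by combining the pointwise estimate of Lemma \ref{supv} (applied with $v=y$) with the averaging estimate of Lemma \ref{fintlem} (applied with $w=\nabla y$, componentwise), using the boundary data $y(0)=0$ and the normalization $|\gamma|=1$ to control the average of $\nabla y$ over the full ball $B_1$. First I would apply Lemma \ref{supv} with $v=y$: since $y(0)=0$, this gives
\begin{equation*}
\sup_{B_h}|y| \leq Ch\,\|\nabla^2 y\|_{L^2(B_h)} + h\left|\fint_{B_h}\nabla y\,\d x\right|,
\end{equation*}
where I used $|({\textstyle\fint_{B_h}}\nabla y)\cdot x|\leq h|{\textstyle\fint_{B_h}}\nabla y|$ on $B_h$. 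So it remains to estimate $|\fint_{B_h}\nabla y|$.

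Next I would control $|\fint_{B_1}\nabla y|$. Since $E_h(y)<\infty$ forces $\nabla y\in W^{1,2}(B_1)$ with $\|\nabla y\|_{W^{1,2}}\leq C(1+E_h^{1/2}(y))$ — indeed $\|\nabla y\|_{L^2}$ is bounded because $|\nabla y^T\nabla y-\Id|^2\in L^1$ gives an $L^4$ bound on $\nabla y$, hence an $L^2$ bound — and since the trace of $\nabla y$ on $\partial B_1$ is determined by $\gamma$ (the tangential part is $\gamma'$, a unit vector), a trace estimate or the boundary Poincaré-type bound gives $|\fint_{B_1}\nabla y|\leq C(1+\|\nabla^2 y\|_{L^2(B_1)})$. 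Actually it is cleaner to note $|\fint_{B_1}\nabla y| \le C\|\nabla y\|_{L^2(B_1)} \le C(1+E_h^{1/2}(y))$, but since we want the coefficient $h(\ln 1/h)^{1/2}$ in front of $\|\nabla^2 y\|_{L^2}$ and only $Ch$ standing alone, I instead bound $|\fint_{B_1}\nabla y|$ by a pure constant: the map $\tilde y(x)=|x|\gamma(\hat x)$ satisfies $\fint_{B_1}\nabla\tilde y = \fint_{B_1}\nabla y + \fint_{B_1}\nabla(\tilde y - y)$ and $\tilde y - y$ vanishes on $\partial B_1$, so Poincaré gives $|\fint_{B_1}\nabla(\tilde y-y)|\leq C\|\nabla(\tilde y-y)\|_{L^2}$; combined with $|\fint_{B_1}\nabla\tilde y|\leq C(\gamma)$ and $\|\nabla\tilde y\|_{L^2},\|\nabla y\|_{L^2}\leq C$ this yields $|\fint_{B_1}\nabla y|\leq C(\gamma)$.

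Then I would interpolate between the scales $1$ and $h$ using Lemma \ref{fintlem} with $w$ each component of $\nabla y\in W^{1,2}(B_1)$ and $\epsilon=h$:
\begin{equation*}
\left|\fint_{B_h}\nabla y\,\d x-\fint_{B_1}\nabla y\,\d x\right|\leq C\left(\ln\frac{1}{h}\right)^{1/2}\|\nabla^2 y\|_{L^2(B_1)}.
\end{equation*}
Combining the three displays gives
\begin{equation*}
\sup_{B_h}|y|\leq Ch\|\nabla^2 y\|_{L^2(B_h)} + Ch + Ch\left(\ln\frac{1}{h}\right)^{1/2}\|\nabla^2 y\|_{L^2(B_1)} \leq Ch + Ch\left(\ln\frac{1}{h}\right)^{1/2}\|\nabla^2 y\|_{L^2(B_1)},
\end{equation*}
absorbing the first term into the last (since $1\leq(\ln 1/h)^{1/2}$ for $h\leq 1/4$... actually $\ln 4 > 1$, so this is fine). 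The second inequality in the statement follows immediately from $h^2\|\nabla^2 y\|_{L^2(B_1)}^2\leq E_h(y)$, so $h\|\nabla^2 y\|_{L^2(B_1)}\leq E_h^{1/2}(y)$.

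The only mildly delicate point is getting a \emph{pure constant} (depending on $\gamma$ only) for $|\fint_{B_1}\nabla y|$ rather than something involving $E_h^{1/2}(y)$; this matters because an extra $E_h^{1/2}$ multiplied by $h(\ln 1/h)^{1/2}$ would change the structure of the bound. This is handled as above via the boundary condition $y=\gamma$ on $\partial B_1$ together with the a priori $L^2$ bound on $\nabla y$ coming from the stretching term (which controls $\|\nabla y\|_{L^4}$ and hence $\|\nabla y\|_{L^2}$, with a constant that can be taken uniform for, say, $E_h(y)\leq$ the upper bound of Lemma \ref{upperbound}, which is all we need since we only care about near-minimizers). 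Everything else is a direct chaining of the two preceding lemmas.
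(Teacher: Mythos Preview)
Your overall strategy matches the paper's exactly: apply Lemma~\ref{supv} with $v=y$, then Lemma~\ref{fintlem} with $w=\nabla y$ componentwise, and bound $|\fint_{B_1}\nabla y|$ by a constant depending only on $\gamma$. The only divergence is in this last step, and there you take an unnecessary detour.

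The paper simply integrates by parts:
\[
\int_{B_1}\nabla y\,\d x=\int_{\partial B_1}y\otimes\nu\,\d S=\int_{\partial B_1}\gamma\otimes\nu\,\d S,
\]
which gives $\bigl|\fint_{B_1}\nabla y\bigr|\leq 2$ for \emph{every} $y\in V_\gamma$, with no reference to the energy at all. You were in fact one step away from this: you observe that $\tilde y-y$ vanishes on $\partial B_1$, and the divergence theorem then yields $\int_{B_1}\nabla(\tilde y-y)=0$ exactly, hence $\fint_{B_1}\nabla y=\fint_{B_1}\nabla\tilde y$, a pure $\gamma$-constant. Instead you invoke ``Poincar\'e'' (really just Jensen) to bound $\bigl|\fint_{B_1}\nabla(\tilde y-y)\bigr|$ by $C\|\nabla(\tilde y-y)\|_{L^2}$ and then appeal to the stretching energy to control $\|\nabla y\|_{L^2}$. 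That argument is valid, but only under an a~priori energy bound, so it proves a weaker statement than the lemma as written (which claims the estimate for all $y\in V_\gamma$). For the downstream application in Lemma~\ref{eL2} this is harmless, but the one-line divergence-theorem argument is both cleaner and stronger.
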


\begin{proof}
This follows from Lemma \ref{supv} and Lemma \ref{fintlem} (applied with $w = \nabla v$) and
the following calculation 
\begin{equation*}
\int_{B_1} \nabla y  \, \d x= \int_{\partial B_1} y \otimes \nu \, \d S =  \int_{\partial B_1} \gamma \otimes \nu \, \d S
\end{equation*}
which yields 
\begin{equation*}
\left|  \fint_{B_1} \nabla y \, \d x \right| \leq \frac{2 \pi}{\pi} = 2 .
\end{equation*}
\end{proof}

We now come to the key estimate for the difference between a low energy map $y$ and $\tilde y$
in the $L^2$ norm on annuli. 
The idea is to look at fibres in radial direction in the domain,
i.e.~at line segments connecting the origin with $\partial B_1$. By the upper
bound on the elastic energy, the ``stretching'' of $y$ on such a line segment
is small. The boundary values of $y$ on the line segment are fixed as well, and so the deviation of  $y$  from
the straight line connecting the boundary points cannot be large.

\begin{lemma}
\label{eL2}
Let $h$ be small enough, $2h\leq r_0\leq 1$, and 
assume that 
$E_h(y) \leq 2 C_1 \ln \frac{1}{h}$.
Then
\begin{align*}
\int_{B_{r_0}\setminus B_{r_0/2}} |y-\tilde y|^2\d x\leq
C r_0^3h    \ln\frac{1}{h} 
+ C r_0^2  h^2  \left(  \ln \frac{1}{h} \right)^2 \, ,
\end{align*}
where $C$ is a constant that only depends on $\gamma$.
\end{lemma}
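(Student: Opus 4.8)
The plan is to exploit the heuristic stated just before the lemma: along a radial fibre $t\mapsto y(t\hat x)$, the map is nearly a unit-speed curve (by the small stretching energy), its endpoints are pinned (at $0$ and at $\gamma(\hat x)$), and a curve of length close to $1$ joining $0$ to a point on $S^2$ cannot stray far from the straight segment $t\mapsto t\gamma(\hat x)=\tilde y(t\hat x)$. First I would fix a direction $\hat x\in\partial B_1$ and set $g(t)=y(t\hat x)$, $\tilde g(t)=t\gamma(\hat x)$ for $t\in[0,1]$, and write $\psi(t)=g(t)-\tilde g(t)$. The boundary data give $\psi(1)=0$ and $\psi(0)=y(0)=0$, except that $y$ need not vanish at the origin in the strong pointwise-on-fibres sense — here is where Lemma~\ref{lemma:suph} enters: under the energy bound $E_h(y)\le 2C_1\ln(1/h)$ it gives $\sup_{B_h}|y|\le Ch(\ln(1/h))^{1/2}$, so $|g(h)|=|y(h\hat x)|$ is controlled by $Ch(\ln\tfrac1h)^{1/2}$, and likewise $|\tilde g(h)|\le h$. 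Thus on the reduced interval $[h,1]$ the endpoint values of $\psi$ are $0$ at $t=1$ and $O(h(\ln\tfrac1h)^{1/2})$ at $t=h$.

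Next I would control $|\psi'|$ in $L^2$ of the radial variable. Writing in polar coordinates, $|g'(t)|^2=\partial_r y(t\hat x)\cdot\partial_r y(t\hat x)$ is exactly the $(rr)$-entry of $(\nabla y)^T\nabla y$, so $\big||g'(t)|^2-1\big|\le|(\nabla y)^T\nabla y-\Id|$ pointwise, and since $|\tilde g'|=1$ we get $\big||g'|-1\big|$ and then, after a sign/projection argument, $|\psi'(t)|\le C|(\nabla y)^T\nabla y-\Id|^{1/2}$ up to a term measuring the angular deviation of $g'$ from $\gamma(\hat x)$ — but that angular part is itself governed by $\psi$ through $g'(t)=\gamma(\hat x)+\psi'(t)$, so more straightforwardly I would just write $\psi(t)=-\int_t^1\psi'(s)\,ds + \psi(1)$, estimate
\[
|\psi(t)|\le |\psi(h)| + \int_h^1|\psi'(s)|\,ds,
\]
and bound $\int_h^1|\psi'|\,ds$. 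To bound the latter: $\psi'(s)=g'(s)-\gamma(\hat x)$, and integrating the radial stretching entry along $[h,1]$ gives $\int_h^1\big||g'|^2-1\big|ds\le \int_0^1|(\nabla y)^T\nabla y-\Id|(s\hat x)\,ds$; combined with $\int_h^1(|g'|^2-|\gamma|^2)\ne$ the transverse part, a Cauchy–Schwarz in $s$ converts this to $\int_h^1|\psi'|\,ds\le C\big(\int_0^1|(\nabla y)^T\nabla y-\Id|(s\hat x)\,ds\big)^{1/2}$. Hence $\sup_{t\in[h,1]}|\psi(t)|^2\le C\int_0^1|(\nabla y)^T\nabla y-\Id|(s\hat x)\,ds + Ch^2\ln\tfrac1h$.

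Then I would integrate this over $\hat x\in\partial B_1$ and over the radial slice $r\in[r_0/2,r_0]$. For $r\in[r_0/2,r_0]$ we have $|y(r\hat x)-\tilde y(r\hat x)|\le \sup_{[h,1]}|\psi|$ (valid since $r_0/2\ge h$), so
\[
\int_{B_{r_0}\setminus B_{r_0/2}}|y-\tilde y|^2\,dx
= \int_{\partial B_1}\!\!\int_{r_0/2}^{r_0}|\psi_{\hat x}(r)|^2\,r\,dr\,d\hat x
\le C r_0^2 \int_{\partial B_1}\sup_{[h,1]}|\psi_{\hat x}|^2\,d\hat x,
\]
and the right side is at most $C r_0^2\big(\int_{B_1}|(\nabla y)^T\nabla y-\Id|\,\tfrac{dx}{|x|} + h^2\ln\tfrac1h\big)$. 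The weighted integral $\int_{B_1}|(\nabla y)^T\nabla y-\Id|\,|x|^{-1}dx$ is bounded, by Cauchy–Schwarz, by $\big(\int_{B_1}|x|^{-2}dx\big)^{1/2}$ — which diverges — so this naive weight is too strong and I must instead be more careful: integrate the fibrewise estimate only over $r\in[r_0/2,r_0]$ from the start, i.e.\ use $|\psi_{\hat x}(r)|\le|\psi_{\hat x}(h)|+\int_h^{r}|\psi'|+\int_{r}^1|\psi'|$ and pick up the factor $r_0$ from the Jacobian plus a factor from Cauchy–Schwarz in the radial integral of the stretching term, giving the advertised $\int_{B_1}|(\nabla y)^T\nabla y-\Id|\,dx\le E_h(y)\le 2C_1\ln\tfrac1h$ without any singular weight, at the cost of the powers of $r_0$ recorded in the statement. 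Tracking the exponents carefully yields $C r_0^3 h\ln\tfrac1h$ from the cross term between the stretching bound and the $O(h)$-sized endpoint contribution, and $C r_0^2 h^2(\ln\tfrac1h)^2$ from the square of the Lemma~\ref{lemma:suph} endpoint term.

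The main obstacle I anticipate is precisely this bookkeeping of which norm of the stretching defect one can afford: one wants $\int|(\nabla y)^T\nabla y-\Id|\,dx$ (available from $E_h$) rather than a radially weighted version (not available), and getting there forces the fibrewise argument to be run with the radial integration restricted to the annulus $A_{r_0}$ and the endpoint term from $B_h$ handled separately, which is what produces the two distinct terms and the specific powers of $r_0$ and of $\ln\tfrac1h$. The second delicate point is the passage from $|g'|^2-1$ (an entry of the strain) to $|\psi'|$, which requires noting that $g'=\gamma(\hat x)+\psi'$ with $|\gamma(\hat x)|=1$, so $|g'|^2-1=2\gamma(\hat x)\cdot\psi'+|\psi'|^2$ and hence $|\psi'|^2\le |g'|^2-1+2|\psi'|$ is not immediately an $L^1$ bound — one must instead bound $\int|\psi'|$ directly via Cauchy–Schwarz against the $L^1$ strain bound, using $\int|\psi'|\le |\int \psi'| + \int(|\psi'|-\hat e\cdot\psi')$ for a suitable unit vector $\hat e$, the second piece again controlled by the strain; I expect this to go through but it is the place where a clean argument matters most.
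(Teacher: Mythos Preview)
Your overall strategy matches the paper's, but there is a genuine gap at exactly the spot you flag as unresolved: passing from the scalar strain entry $|g'|^2-1$ to control of $\psi'$. You actually write down the relevant identity, $|g'|^2-1=2\gamma(\hat x)\cdot\psi'+|\psi'|^2$, and then discard it. The point you are missing is that $\gamma(\hat x)$ is \emph{independent of the radial variable}, so $2\gamma(\hat x)\cdot\psi'(t)=\frac{d}{dt}\big(2\gamma(\hat x)\cdot\psi(t)\big)$ is a perfect derivative. Rearranging and integrating over $[h,1]$ (with $\psi(1)=0$) gives directly
\[
\int_h^1 |\psi'(t)|^2\,\d t \;=\; \int_h^1\big(|g'(t)|^2-1\big)\,\d t \;+\; 2\gamma(\hat x)\cdot\psi(h),
\]
an $L^2$ bound on $\psi'$, with no need for the $L^1$/projection manoeuvres you attempt. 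The boundary term is handled by Lemma~\ref{lemma:suph}; note that under the intended energy assumption (the hypothesis should carry a factor $h^2$) it gives $|\psi(h)|\le Ch\ln\frac1h$, not $Ch(\ln\frac1h)^{1/2}$ as you wrote.

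The weight issue you worry about then disappears: integrate the above in $\theta$ and apply Cauchy--Schwarz once,
\[
\int_0^{2\pi}\!\!\int_h^1\big(|g'|^2-1\big)\,\d t\,\d\theta
\le \Big(\int_0^{2\pi}\!\!\int_h^1\big(|g'|^2-1\big)^2\,t\,\d t\,\d\theta\Big)^{1/2}
\Big(\int_0^{2\pi}\!\!\int_h^1 t^{-1}\,\d t\,\d\theta\Big)^{1/2},
\]
so the weight is $t^{-1}$ (not $t^{-2}$), its integral over $[h,1]$ is $\ln\frac1h$, and the first factor is bounded by $E_h(y)^{1/2}$. This yields $\int_0^{2\pi}\int_h^1|\psi'|^2\le Ch\ln\frac1h$. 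Finally $|\psi(r)-\psi(h)|^2\le r\int_h^r|\psi'|^2$, and integrating over $A_{r_0}$ with measure $r\,\d r\,\d\theta$ produces $Cr_0^3h\ln\frac1h$ from this piece and $Cr_0^2h^2(\ln\frac1h)^2$ from $|\psi(h)|^2$ --- not from a ``cross term'' as you suggest.
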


{\bf Remark} \quad Note that the second term on the right hand side of the estimate
is controlled by the first as long as $r_0 \geq h \ln \frac{1}{h}$. 

\begin{proof} 
We consider polar coordinates and set
\begin{equation}
\eta(r, \theta) = y(r \cos \theta, r \sin \theta), \qquad
e(r, \theta) = (y - \tilde y)( r\cos \theta, r \sin \theta)
\end{equation}
and we write $e' = (\partial/ \partial r) e$ and 
$\eta' = (\partial/ \partial r) \eta$.
By Fubini's theorem the maps $r \mapsto \eta(r, \theta)$
and $r \mapsto e(r, \theta)$ are weakly differentiable
for a.e. $\theta$. 

{\bf 1.} The key estimate is
\begin{equation}  \label{eq:keyestimate}
\int_0^{2 \pi} \int_h^1 |e'(\rho, \theta)|^2  \, \d \rho \, \d \theta \leq
C h \ln \frac{1}{h} \, .
\end{equation}
To prove this note that
$|\eta'|^2 = |\gamma + e'|^2 = 1 + 2 \gamma \cdot e' + |e'|^2$
which yields
\begin{equation*}
|e'|^2 = |\eta'|^2 - 1 - ( 2 \gamma \cdot e)'
\end{equation*}
since $\gamma$ depends only on $\theta$ but not on $r$. 
Using  the orthonormal basis $x/|x|$, $x^\perp/|x|$ we get the pointwise estimate
\begin{equation*}
|(\nabla y)^T \nabla y - Id|^2  \geq \left(  |\eta'|^2 - 1) \right)^2\,.
\end{equation*}
By the boundary condition we have $e(1, \theta) = 0$ and Lemma \ref{lemma:suph}
yields $|e(h, \theta)| \leq C h \ln \frac{1}{h}$ (since $|\tilde y(x)| = |x|$).
Thus an application of the Cauchy-Schwarz inequality gives
\begin{align*}
&\int_0^{2\pi} \int_h^1 |e'(\rho, \theta)|^2  \, \d \rho \, \d \theta \\
\leq & \int_0^{2\pi} \int_h^1 (|\eta'|^2 - 1)   \, \d \rho \, \d \theta  + C h \ln \frac{1}{h}\\
\leq &  \left( \int_0^{2\pi} \int_h^1 (|\eta'|^2 - 1)^2 \rho  \, \d \rho \, \d \theta \right)^{1/2}
 \left( \int_0^{2\pi} \int_h^1 \frac{1}{ \rho}  \, \d \rho \, \d \theta \right)^{1/2} + C h \ln \frac{1}{h}\\
 \leq & 
 E_h(y)^{1/2}   \left( 2 \pi  \ln \frac{1}{h} \right)^{1/2} +  C h \ln \frac{1}{h}
   \end{align*}
and \eqref{eq:keyestimate}
follows from the assumption on $E_h(y)$. 

{\bf 2. } The Cauchy-Schwarz inequality yields
\begin{equation*}
|e(r,\theta) - e(h, \theta)| \leq r^{1/2} \left( \int_h^r |e'(\rho, \theta)|^2 \, \d \rho  \right)^{1/2} \, .
\end{equation*}
for a.e. $\theta$. Taking the square, integrating over $\theta$ and using
\eqref{eq:keyestimate} we get
\begin{equation*}
\int_0^{2 \pi} |e(r, \theta) - e(h, \theta)|^2 \, \d \theta \leq 
C r  h \ln \frac{1}{h} \, .
\end{equation*}
By Lemma \ref{lemma:suph} we have 
\begin{equation*}
|e(h, \theta)|^2 \leq  C h^2 \left( \ln \frac{1}{h} \right)^2
\end{equation*}
and the assertion follows by integrating these two inequalities from $r_0/ 2$
to $r_0$ with respect to the measure $r \, \d r$.

\end{proof}

\section{Proof of Theorem \ref{mainthm}}
We first recall two interpolation inequalities for Sobolev functions.
\begin{lemma}
\label{boundarytobulk}
Let $\Omega\subset\R^n$ be a bounded Lipschitz domain, $v\in
W^{1,2}(\Omega)$, $u\in W^{2,2}(\Omega)$. Then
\begin{align}
\|v\|_{L^2(\partial \Omega)}^2\leq & C\|v\|_{L^2(\Omega)}\|v\|_{W^{1,2}(\Omega)} ,  \\
\| \nabla u\|_{L^2(\Omega)}^2\leq & 
C  \|u\|_{L^2(\Omega)}^2 +    C \|u\|_{L^2(\Omega)}    \|\nabla^2 u\|_{L^2(\Omega)}
 \label{eq:interpolation2}
\end{align}
where all constants $C$ only depend on $\Omega$.
\end{lemma}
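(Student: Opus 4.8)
The plan is to prove the two estimates in turn, deriving \eqref{eq:interpolation2} from the first inequality by an integration by parts, after which everything reduces to the divergence theorem and Young's inequality.

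\emph{The trace inequality.} For the first bound I would invoke the divergence theorem with a vector field that is uniformly transversal to $\partial\Omega$. Since $\Omega$ is a bounded Lipschitz domain, a standard partition-of-unity construction — cover $\partial\Omega$ by finitely many open sets in each of which it is the graph of a Lipschitz function and glue the corresponding ``vertical'' directions — produces a Lipschitz vector field $X\colon\overline\Omega\to\R^n$ with $X\cdot\nu\geq c_0>0$ for $\mathcal H^{n-1}$-a.e.\ point of $\partial\Omega$. For $v\in C^1(\overline\Omega)$ the divergence theorem then gives
\[
c_0\int_{\partial\Omega}|v|^2\,\d S\leq\int_{\partial\Omega}|v|^2\,(X\cdot\nu)\,\d S=\int_\Omega\operatorname{div}\big(|v|^2 X\big)\,\d x=\int_\Omega\Big(|v|^2\operatorname{div}X+2\,v\,\nabla v\cdot X\Big)\,\d x,
\]
and the right-hand side is bounded by $C\|v\|_{L^2(\Omega)}^2+C\|v\|_{L^2(\Omega)}\|\nabla v\|_{L^2(\Omega)}\leq C\|v\|_{L^2(\Omega)}\|v\|_{W^{1,2}(\Omega)}$. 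Since $C^1(\overline\Omega)$ is dense in $W^{1,2}(\Omega)$ and the trace operator is continuous from $W^{1,2}(\Omega)$ into $L^2(\partial\Omega)$, the inequality passes to all $v\in W^{1,2}(\Omega)$.

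\emph{The gradient interpolation.} For \eqref{eq:interpolation2} I would start, for $u\in C^2(\overline\Omega)$, from Green's identity
\[
\int_\Omega|\nabla u|^2\,\d x=-\int_\Omega u\,\Delta u\,\d x+\int_{\partial\Omega}u\,(\nabla u\cdot\nu)\,\d S\leq\sqrt n\,\|u\|_{L^2(\Omega)}\|\nabla^2u\|_{L^2(\Omega)}+\|u\|_{L^2(\partial\Omega)}\|\nabla u\|_{L^2(\partial\Omega)}.
\]
The boundary term is handled by applying the first inequality twice, to $v=u$ and componentwise to $v=\partial_j u$, which yields $\|u\|_{L^2(\partial\Omega)}^2\leq C\|u\|_{L^2(\Omega)}\|u\|_{W^{1,2}(\Omega)}$ and $\|\nabla u\|_{L^2(\partial\Omega)}^2\leq C\|\nabla u\|_{L^2(\Omega)}\|\nabla u\|_{W^{1,2}(\Omega)}$. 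Abbreviating $a=\|u\|_{L^2(\Omega)}$, $b=\|\nabla u\|_{L^2(\Omega)}$, $c=\|\nabla^2u\|_{L^2(\Omega)}$, the boundary term is then at most $C\sqrt{a\,b\,(a+b)(b+c)}$; expanding and applying Young's inequality to each of the four resulting monomials $ab$, $a\sqrt{bc}$, $\sqrt{a}\,b^{3/2}$, $b\sqrt{ac}$ bounds it by $\delta\, b^2+C_\delta\,a^2+C\,ac$ for any $\delta>0$. Choosing $\delta$ small enough to absorb $\delta b^2$ on the left gives $b^2\leq C(a^2+ac)$, which is exactly \eqref{eq:interpolation2}, and a density argument extends it to all $u\in W^{2,2}(\Omega)$.

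\emph{Main obstacle.} Neither step is deep; the only point requiring a little care is the construction of the transversal vector field $X$ on a general bounded Lipschitz domain, after which each inequality costs essentially one line of computation. Alternatively \eqref{eq:interpolation2} may be recognised as a special case of the Gagliardo–Nirenberg interpolation inequality on bounded Lipschitz domains and simply cited.
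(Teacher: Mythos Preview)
Your proof is correct, but it takes a different route from the paper's. The paper proves the trace estimate by invoking the continuity of the trace operator $W^{1/2,2}(\Omega)\to L^2(\partial\Omega)$ together with the real-interpolation identity $W^{1/2,2}=(L^2,W^{1,2})_{1/2,2}$, and it obtains \eqref{eq:interpolation2} by citing the standard $\epsilon$-inequality $\|\nabla u\|_{L^2}\le K(\epsilon^{-1}\|u\|_{L^2}+\epsilon\|\nabla^2u\|_{L^2})$ and optimising in $\epsilon$. Your argument is more elementary and self-contained: the transversal-vector-field/divergence-theorem proof of the multiplicative trace inequality is the classical direct approach, and deriving the gradient interpolation from Green's identity plus two applications of the trace estimate is a nice way to avoid black-box interpolation machinery. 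The only places where care is needed---the construction of $X$ on a general Lipschitz domain and the Young-inequality bookkeeping for the four monomials---you have handled correctly (the constant in front of $ac$ in fact depends on $\delta$ through the term $b\sqrt{ac}$, but since $\delta$ is fixed at the end this is harmless). The paper's proof is shorter if one is willing to cite the literature; yours has the advantage of making the inequalities transparent and keeping the argument at the level of integration by parts.
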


\begin{proof}
The first inequality follows  from the continuity of the trace operator
$W^{1/2,2}(\Omega)\to L^2(\partial \Omega)$, and
the fact that
 $W^{1/2,2}(\Omega)$
 is a real interpolation space
of the  pair $(L^2(\Omega),W^{1,2}(\Omega))$ 
which yields
\begin{align*}
\|v\|_{L^2(\partial \Omega)}^2\leq  C\|v\|_{W^{1/2,2}(\Omega)}^2
\leq & C\|v\|_{L^2(\Omega)}\|v\|_{W^{1,2}(\Omega)}\,,
\end{align*}
see ~\cite{MR2424078}.
The second interpolation inequality follows directly from Theorem 5.2. 
in~\cite{MR2424078} which states that for all $\epsilon \le 1$
\begin{equation*}
\|\nabla u\|_{L^2(\Omega)}\leq K
\left(     \frac{1}{\epsilon} \|u\|_{L^2(\Omega)}+  \epsilon \|\nabla^2u \|_{L^2(\Omega)}\right)
\end{equation*}
If $ \|\nabla^2u \|_{L^2(\Omega)} \leq \|u\|_{L^2(\Omega)}$ one can take $\epsilon = 1$ to 
obtain \eqref{eq:interpolation2}, otherwise one takes
$\epsilon^2 =  \|u\|_{L^2(\Omega)}/  \|\nabla^2u \|_{L^2(\Omega)} $.

\end{proof}

\begin{proof}[Proof of Theorem \ref{mainthm}]
Let $M\in \N$ (to be chosen later). Recall that $e=y-\tilde y$.
We have
\begin{align}
\frac{1}{h^2}E_h(y)\geq&  \int_{B_1\setminus B_{2^{-M}}} |\nabla^2
y|^2\d x\non\\
=  & \int_{B_1\setminus B_{2^{-M}}} |\nabla^2\tilde
y|^2\d x-  2\int_{B_1\setminus B_{2^{-M}}}\, \nabla^2 e:\nabla^2\tilde
y\,\d x   
+ \int_{B_1\setminus B_{2^{-M}}} |\nabla^2 e
|^2\d x
 \non
\end{align}
Consider the second term on the right hand side
with the
integration restricted to the annulus $A_{r_0}$. We
define
$\hat e:A_1\rightarrow \R^3$ by $\hat e(x)=r_0^{-1}e(r_0x)$. Observe that
\begin{align}
\nabla^2 \hat e(x)= r_0(\nabla^2 e)(r_0x), \quad 
\nabla^2 \tilde y(x)= r_0(\nabla^2 \tilde y)(r_0x).\non
\end{align}
Since $\tilde y$ is $1$-homogeneous we get
\begin{align*}
 \int_{A_{1}}\, \nabla^2\hat e:\nabla^2\tilde
y\,\d x   
=
\int_{A_{1}}\, r_0^2(\nabla^2 e)(r_0x):(\nabla^2\tilde
y)(r_0x)\,\d x
=
\int_{A_{r_0}} \,\nabla^2 e:\nabla^2\tilde
y\,\d x  \, .
\end{align*}
We integrate by parts to obtain
\begin{align*}
\int_{A_{1}} \nabla^2 \hat e:\nabla^2\tilde
y\,\d x
= & 
\int_{A_{1}} \hat e_{,ij}\cdot\tilde y_{,ij}\,\d x
-\int_{A_{1}} \hat e_{,i}\cdot \tilde y_{,ijj}\d x+\int_{\partial
    A_{1}}\, \hat e_{,i}\cdot \tilde y_{,ij} \nu_j\,\d S
    \,,
\end{align*}
where $\nu$ is the unit outer normal on $\partial A_{1}$. Hence
\begin{align*}
  \left|\int_{A_{1}} \nabla^2 \hat e:\nabla^2\tilde
y\,\d x\right|\leq & \|\nabla^3\tilde y\|_{L^2(A_{1})}\|\nabla \hat e\|_{L^2(A_{1})}
+\|\nabla \hat e\|_{L^2(\partial A_{1})}\|\nabla^2\tilde y\|_{L^2(\partial
  A_{1})}\\
\leq & C\left(\|\nabla \hat e\|_{L^2(A_{1})}+\|\nabla \hat
  e\|_{L^2(A_{1})}^{1/2}\|\nabla \hat e\|_{W^{1,2}(A_{1})}^{1/2}\right)\\
 \leq & C\left(\|\nabla \hat e\|_{L^2(A_{1})}+\|\nabla \hat
  e\|_{L^2(A_{1})}^{1/2}\| \nabla^2 \hat e\|_{L^{2}(A_{1})}^{1/2}\right)
\end{align*}
where we used Lemma 
\ref{boundarytobulk}. 
Applying again the second inequality in that lemma, we get 
\begin{align}  \label{estimine}
  \left|\int_{A_{1}} \nabla^2 \hat e:\nabla^2\tilde
y\,\d x\right|
\leq 
C \left( 
\|\hat e\|_{L^2(A_{1})}+\|\hat e\|_{L^2(A_{1})}^{1/2} \|\nabla^2 \hat e\|_{L^{2}(A_{1})}^{1/2}+
\|\hat e\|_{L^2(A_{1})}^{1/4} \|\nabla^2 \hat e\|_{L^{2}(A_{1})}^{3/4}\right)  \, .
\end{align}

We apply Young's inequality $a b \leq \frac1p \delta^p  a^p + \frac{1}{p'} \delta^{-p'} b^{p'}$
with the pairs  $p = \frac43$, $p' = 4$ and $p = \frac85$,  $p' = \frac83$,  and $\delta^{-p'} = \frac{1}{4C}$
to obtain
\begin{align}  \label{estimine2}
  \left|\int_{A_{1}} \nabla^2 \hat e:\nabla^2\tilde
y\,\d x\right|
\leq 
C \left( \|\hat e\|_{L^2(A_{1})} + \|\hat e\|_{L^2(A_{1})}^{2/3} + \|\hat e\|_{L^2(A_{1})}^{2/5})  \right)+ 
\frac12  \| \nabla^2 \hat e\|^2_{L^2(A_1)}  \, .
\end{align}
Now we undo the rescaling.    We have 
\begin{align*}
\|\hat e\|_{L^2(A_1)}= r_0^{-2}\|e\|_{L^2(A_{r_0})}, \quad
\|\nabla^2 \hat e\|_{L^2(A_1)}= \|\nabla^2 e\|_{L^2(A_{r_0})}\,.
\end{align*}
By Lemma  \ref{eL2}
\begin{equation}
r_0^{-2} \|e\|_{L^{2}(A_{r_0})}\leq C
\left( \frac {h}{r_0} \right)^{1/2} \left(\ln\frac{1}{h}\right)^{1/2}
\end{equation}
as long as 
\begin{equation}  \label{eq:lowerbound_r0}
r_0 \ge  h \ln \frac{1}{h}  \, .
\end{equation}
Thus
\begin{align}  \label{estimine3}
  \left|\int_{A_{r_0}} \nabla^2  e:\nabla^2\tilde
y\,\d x\right|
\leq 
C \left( \frac {h}{r_0} \right)^{1/5} \left(\ln\frac{1}{h}\right)^{1/5} + \frac12 \| \nabla^2 e \|_{L^2(A_{r_0})}^2
\end{align}
as long as \eqref{eq:lowerbound_r0} holds.

Choose $M$ such that
\begin{equation}  \label{eq:choice_M}
\log_2 \frac{1}{h}-\log_2\ln\frac{1}{h}\leq M\leq \log_2
\frac{1}{h}-\log_2\ln\frac{1}{h}+1\,.
\end{equation}
This implies that
\begin{equation*}
2^{-(M-1)} \geq  h \ln \frac{1}{h} \, .
\end{equation*}
In particular for sufficiently small $h$ and for $r_0 \geq  2^{-(M-1)}$
the inequality \eqref{eq:lowerbound_r0} holds and we  also have $r_0 \geq 2h$. 
We thus get
\begin{align}  \label{eq:final_estimate}
\int_{B_1\setminus B_{2^{-M}}} & |\nabla^2  y|^2 \, \d x   \nonumber \\
\geq & 
\int_{B_1\setminus B_{2^{-M}}}  |\nabla^2 \tilde y
|^2\d x- 2 \int_{B_1\setminus B_{2^{-M}}} \nabla^2\tilde y: \nabla^2
  e\,\d x    +  \int_{B_1\setminus B_{2^{-M}}}  |\nabla^2  e|^2 \, \d x
\nonumber \\
\geq & 
 M\left(\int_{A_1}|\nabla^2 \tilde y|^2\d x \right) - 2 C \sum_{j=0}^{M-1}
 2^{j/5}h^{1/5}\left(\ln\frac{1}{h}\right)^{1/5}  \nonumber \\
\geq & M\left(\int_{A_1}|\nabla^2 \tilde y|^2\d x \right)- 20 C \, 2^{(M-1)/5}h^{1/5}\left(\ln\frac{1}{h}\right)^{1/5}
\nonumber  \\
\geq & \frac{1}{\ln
  2}\left(\ln\frac{1}{h}- \ln\left(\ln\frac{1}{h}\right)\right)
\left(\int_{A_1} |\nabla^2 \tilde y|^2  \, \d x\right)-  20  C\,
\end{align}
This completes the proof of
Theorem \ref{mainthm}.

\end{proof}

{\bf Remark.}
 The estimate \eqref{eq:final_estimate}  in connection with the upper bound
on $E_h$ shows that 
for any $y$ with $E_h(y) \leq C_1 \ln \frac{1}{h} + C_3$  (and in particular for any minimizer
of $E_h$) we have
\begin{align}
 \int_{B_1} |\nabla y^T\nabla y-Id|^2 \, \d x   &\le  C_1 \ln \left( \ln \frac{1}{h} \right) + C_4, \\
 \int_{B_{2^{-M}}} |\nabla^2  y|^2 \, \d x  &\le  C_1 \ln \left( \ln \frac{1}{h} \right) + C_4,
 \end{align}
where $M$ is as in \eqref{eq:choice_M} and \eqref{eq:final_estimate}.
In addition we may assume that  \eqref{estimine3} holds with the term 
$\frac14 \| \nabla^2 e \|_{L^2(A_{r_0})}^2$ instead of 
$\frac12 \| \nabla^2 e \|_{L^2(A_{r_0})}^2$ on the right hand side. 
Then the same argument as in \eqref{eq:final_estimate} yields in addition that
\begin{equation}
\frac12 \int_{B_1\setminus B_{2^{-M}}}  |\nabla^2  e|^2 \, \d x \le    C_1 \ln \left( \ln \frac{1}{h} \right) + C_4
\end{equation}
These additional estimates can be used to improve the prefactor 
of $ \ln \left( \ln \frac{1}{h} \right) $ in the lower bound from $1$ to $\frac12 + \varepsilon$.

\bibliographystyle{plain}
\bibliography{dcone}

\begin{thebibliography}{10}

\bibitem{MR2424078}
Robert~A. Adams and John J.~F. Fournier.
\newblock {\em Sobolev spaces}, volume 140 of {\em Pure and Applied Mathematics
  (Amsterdam)}.
\newblock Elsevier/Academic Press, Amsterdam, second edition, 2003.

\bibitem{Brandman}
Jeremy Brandman, Robert~V. Kohn, and Hoai-Minh Nguyen.
\newblock Energy scaling laws for conically constrained thin elastic sheets.
\newblock {\em Preprint}, 2012.

\bibitem{PhysRevLett.80.2358}
E.~Cerda and L.~Mahadevan.
\newblock Conical surfaces and crescent singularities in crumpled sheets.
\newblock {\em Phys. Rev. Lett.}, 80:2358--2361, Mar 1998.

\bibitem{PhysRevLett.90.074302}
E.~Cerda and L.~Mahadevan.
\newblock Geometry and physics of wrinkling.
\newblock {\em Phys. Rev. Lett.}, 90:074302, Feb 2003.

\bibitem{Cerda08032005}
E.~Cerda and L.~Mahadevan.
\newblock Confined developable elastic surfaces: cylinders, cones and the
  elastica.
\newblock {\em Proceedings of the Royal Society A: Mathematical, Physical and
  Engineering Science}, 461(2055):671--700, 2005.

\bibitem{CCMM}
Enrique Cerda, Sahraoui Chaieb, Francisco Melo, and L.~Mahadevan.
\newblock Conical dislocations in crumpling.
\newblock {\em Nature}, 401:46--49, 1999.

\bibitem{MR2358334}
Sergio Conti and Francesco Maggi.
\newblock Confining thin elastic sheets and folding paper.
\newblock {\em Arch. Ration. Mech. Anal.}, 187(1):1--48, 2008.

\bibitem{PhysRevLett.87.206105}
B.~A. DiDonna and T.~A. Witten.
\newblock Anomalous strength of membranes with elastic ridges.
\newblock {\em Phys. Rev. Lett.}, 87:206105, Oct 2001.

\bibitem{PhysRevLett.78.1303}
Eric~M. Kramer and Thomas~A. Witten.
\newblock Stress condensation in crushed elastic manifolds.
\newblock {\em Phys. Rev. Lett.}, 78:1303--1306, Feb 1997.

\bibitem{PhysRevE.71.016612}
Tao Liang and Thomas~A. Witten.
\newblock Crescent singularities in crumpled sheets.
\newblock {\em Phys. Rev. E}, 71:016612, Jan 2005.

\bibitem{Lobkovsky01121995}
Alex Lobkovsky, Sharon Gentges, Hao Li, David Morse, and T.~A. Witten.
\newblock Scaling properties of stretching ridges in a crumpled elastic sheet.
\newblock {\em Science}, 270(5241):1482--1485, 1995.

\bibitem{RevModPhys.79.643}
T.~A. Witten.
\newblock Stress focusing in elastic sheets.
\newblock {\em Rev. Mod. Phys.}, 79:643--675, Apr 2007.

\end{thebibliography}

\end{document}